\theoremstyle{plain}
\newtheorem{theo}{Theorem}[section]
\newtheorem{prop}[theo]{Proposition}
\newtheorem{coro}[theo]{Corollary}
\newtheorem{ques}{Question}
\theoremstyle{definition}
\newtheorem{defi}[theo]{Definition}
\theoremstyle{remark}
\newtheorem{rema}[theo]{Remark}
\newcommand{\eps}{\epsilon}
\newcommand{\bfR}{\mathbf{R}}
\newcommand{\HH}{\mathbb{H}}
\newcommand{\QQ}{\mathbb{Q}}
\newcommand{\CC}{\mathbf{C}}
\newcommand{\ZZ}{\mathbb{Z}}
\newcommand{\cC}{\mathcal{C}}
\newcommand{\cD}{\mathcal{D}}
\newcommand{\cH}{\mathcal{H}}
\newcommand{\cO}{\mathcal{O}}
\newcommand{\lsta}{_{*}}
\DeclareMathOperator{\codim}{codim}
\DeclareMathOperator{\gr}{gr}
\newcommand{\SheafHom}{\mathcal{H}om}
\newcommand{\SheafExt}{\mathcal{E}xt}
\newcommand{\duBois}{\underline{\Omega}}
\DeclareMathOperator{\sing}{sing}
\DeclareMathOperator{\lcd}{lcd}
\DeclareMathOperator{\lcdef}{lcdef}
\newcommand{\Gamunder}{\underline{\Gamma}}
\begin{document}
	\thanks{The author was partially supported by NSF grant DMS-2301463 and the Simons Collaboration grant Moduli of Varieties.}
	
	\subjclass[2020]{14B05, 32S35}
	
	\author{Hyunsuk Kim}
	
	\address{Department of Mathematics, University of Michigan, 530 Church Street, Ann Arbor, MI 48109, USA}
	
	\email{kimhysuk@umich.edu}	


	
	
	\begin{abstract} 
         We construct some version of the trace morphism between the Du Bois complexes, with applications towards the behavior of the local cohomological dimension and some Hodge theoretic aspects of singularities under finite morphisms.
	\end{abstract} 
	
	\title[Trace for the Du Bois complex]{Trace for the Du Bois complex}
	
	\maketitle
	
 
    \section{Introduction}
    Let $X$ be a variety, i.e., a reduced, separated scheme of finite type which is defined over $\CC$ (non-necessarily irreducible). In \cite{DuBois:complexe-de-deRham}, Du Bois constructs a filtered complex $(\duBois_{X}^{\bullet}, F^{\bullet})$ which is the correct replacement of the de Rham complex when $X$ is singular. In fact, it is the sheaf theoretic object that computes the Hodge structure of an arbitrary proper variety constructed by Deligne. The $p$-th graded piece of this complex (with a suitable shift) $\duBois_{X}^{p}$ lives in the bounded derived category of coherent sheaves, and it plays an important role in the recent development of `higher' notions of Du Bois and rational singularities, for example, \cite{MP-higher-rational}, \cite{FL-isosing}, \cite{FL-HDuBois}, \cite{Jung-Kim-Saito-Yoon}, \cite{MOPW}, \cite{CDM}, \cite{SVV-DB}, \cite{PSV}, \cite{park-popa}, \cite{kovacs-injectivity} for a non-exhaustive list. The question that we want to answer in this article is the following:
    \begin{ques}
        Let $\pi \colon Y \to X$ be a finite surjective morphism between normal algebraic varieties. Is there a version of the trace map for the Du Bois complex?
    \end{ques}
    Toward this question, we prove the following result.
 
    \begin{theo} \label{theo:main}
        Let $X$ be a variety and $\pi = \pi_{X} \colon X \to X'$ be a finite group quotient by $G$. Then the composition of the natural morphisms
        $$ \duBois_{X'}^{\bullet} \to \bfR\pi\lsta \duBois_{X}^{\bullet} \to \bfR \Gamunder^{G} \bfR\pi\lsta \duBois_{X}^{\bullet} $$
        is an isomorphism in the filtered derived category. In particular, after taking graded pieces, the following compositions are isomorphisms for all $p$:
        $$ \duBois_{X'}^{p} \to \bfR\pi\lsta \duBois_{X}^{p} \to \bfR \Gamunder^{G} \bfR \pi\lsta \duBois_{X}^{p}.$$
    \end{theo}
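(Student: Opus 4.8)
The plan is to strip off all the formal features of the statement, reduce to a single graded piece over an affine base, then invoke equivariant resolution of singularities to reduce to the case $X$ smooth, and finally treat that case by a direct comparison of invariant differential forms. \emph{Reductions:} the functors $\bfR\pi\lsta$ and $\bfR\Gamunder^{G}$, and the natural maps in the statement, are all compatible with the Hodge filtration, so the composite is a morphism in the filtered derived category; as the filtration on $\duBois^{\bullet}$ is bounded, it is an isomorphism as soon as it is a quasi-isomorphism on each graded piece, so I would fix $p$. Since $\pi$ is finite, $\bfR\pi\lsta=\pi\lsta$; since the ground field is $\CC$ and $G$ is finite, $\bfR\Gamunder^{G}$ is the exact functor $(-)^{G}$, a direct summand of the identity via $e=\tfrac{1}{|G|}\sum_{g\in G}g$. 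The $G$-action on $X$ lies over $X'$, so the functoriality map $\duBois_{X'}^{\bullet}\to\pi\lsta\duBois_{X}^{\bullet}$ is $G$-equivariant for the trivial action on the source and factors canonically through $(\pi\lsta\duBois_{X}^{\bullet})^{G}$; the Theorem is then that $\duBois_{X'}^{p}\to(\pi\lsta\duBois_{X}^{p})^{G}$ is an isomorphism. Everything is local on $X'$, so I may take $X'$, hence $X$, affine.

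\emph{Reduction to $X$ smooth.} By $G$-equivariant resolution of singularities choose a $G$-equivariant hyperresolution $\epsilon\colon X_{\bullet}\to X$ with each $X_{n}$ smooth (the $G$-action allowed to permute components), so that $\duBois_{X}^{\bullet}\cong\bfR\epsilon\lsta(\Omega^{\bullet}_{X_{\bullet}},\sigma)$ $G$-equivariantly and $\pi\lsta\duBois_{X}^{\bullet}\cong\bfR(\pi\epsilon)\lsta\Omega^{\bullet}_{X_{\bullet}}$ with its induced action. Each map $X_{n}\to X'$ is $G$-invariant, hence factors through the finite map $X_{n}/G\to X'$; since finite pushforward and $(-)^{G}$ are exact, $(\pi\lsta\duBois_{X}^{\bullet})^{G}$ is the totalization over $n$ of the pushforwards to $X'$ of $(\pi_{n\,\ast}\Omega^{\bullet}_{X_{n}})^{G}$, where $\pi_{n}\colon X_{n}\to X_{n}/G$. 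On the other hand the augmented simplicial scheme $X_{\bullet}/G\to X'$ is again of cohomological descent (it is obtained from the descent augmentation $X_{\bullet}\to X$ by passing to $G$-quotients, which is harmless as each map to its $G$-quotient is finite), so by the descent property of the Du Bois functor $\duBois_{X'}^{\bullet}$ is the totalization of the pushforwards to $X'$ of $\duBois^{\bullet}_{X_{n}/G}$. Matching term by term, it suffices to prove the Theorem when $X$ is smooth: that $\duBois^{p}_{X/G}\to(\pi\lsta\Omega^{p}_{X})^{G}$ is an isomorphism, the target being a sheaf in degree $0$.

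\emph{The smooth case.} Now $X'=X/G$ has quotient singularities, and I would build a hyperresolution of $X'$ straight from $\pi$: put $Z_{0}=X$ with $Z_{0}\to X'$ equal to $\pi$, let $Z_{n}$ be a resolution of the reduced $(n{+}1)$-fold fibre product $X\times_{X'}\cdots\times_{X'}X$, and assemble these into an augmented simplicial scheme $Z_{\bullet}\to X'$, which is of cohomological descent because $\pi$ is proper surjective; thus $\duBois^{\bullet}_{X'}\cong\bfR(\mathrm{aug})\lsta(\Omega^{\bullet}_{Z_{\bullet}},\sigma)$. Over the open set $U'\subset X'$ where $\pi$ is étale the fibre products are smooth and equal to $G^{n}\times\pi^{-1}(U')$, so there $Z_{\bullet}$ is the bar construction of $G$ and the comparison with $(\pi\lsta\Omega^{\bullet}_{X})^{G}$ is the usual one, group cohomology of $G$ collapsing onto invariants in characteristic $0$. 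Propagating this identification across the ramification locus is the main obstacle: I expect to reduce, via the étale-local structure of quotient singularities (and, by Chevalley--Shephard--Todd, to the pseudo-reflection-free case, so that $\pi$ is étale in codimension one), to $G\subset\GL_{n}(\CC)$ acting linearly on $\CC^{n}$, and then carry out the explicit computation identifying $\bfR(\mathrm{aug})\lsta\Omega^{p}_{Z_{\bullet}}$ with the sheaf of invariant forms $(\pi\lsta\Omega^{p}_{\CC^{n}})^{G}$ — or else invoke the known description of $\duBois^{p}$ of a quotient singularity. Along the way one must check that the isomorphism so produced is the natural map of the first step, i.e.\ that all adjunction and functoriality maps are compatible with the chosen hyperresolutions.

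Combining the last two steps yields the graded statement for every $p$ and every $X$, and hence the filtered statement, proving the Theorem. An alternative to these two steps, avoiding the local computation, is to realize $(\duBois^{\bullet}_{X},F)$ as the de Rham complex with its Hodge filtration of the canonical complex of mixed Hodge modules $a_{X}^{\ast}\QQ^{H}$, and to deduce the result from the projection formula for the finite morphism $\pi$ together with $(\pi\lsta\QQ^{H}_{X})^{G}\cong\QQ^{H}_{X'}$; this replaces the explicit computation by an appeal to Saito's theory, and is probably the quicker route. In either approach the single non-formal ingredient is the behaviour of $\duBois^{p}$ along the quotient, i.e.\ the ramification, locus.
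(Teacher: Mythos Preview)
Your reduction to the smooth case takes a different route from the paper's. The paper argues by induction on $\dim X$: pick a $G$-equivariant resolution $\mu\colon\widetilde X\to X$ that is an isomorphism outside $Z=X_{\sing}$, set $E=\mu^{-1}(Z)_{\mathrm{red}}$, and pass to the $G$-quotients $\widetilde X',Z',E'$. One then stacks the exact (cdh) triangle for the square $(\widetilde X',E',Z',X')$ on top of $\bfR\pi_{X,\ast}$ of the triangle for $(\widetilde X,E,Z,X)$ and then $\bfR\Gamunder^{G}$ of the latter, obtaining a $3\times 3$ diagram whose rows are triangles. The composite vertical maps on the $\widetilde X$-column are isomorphisms by Du Bois's Th\'eor\`eme~5.12, and on the $Z$- and $E$-columns by the inductive hypothesis; hence so is the map on the $X$-column. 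No simplicial machinery beyond this single triangle is needed.

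Your hyperresolution argument can be completed, but several steps are thinner than they look. The assertion that $X_\bullet/G\to X'$ is of cohomological descent is not explained by ``each map to its $G$-quotient is finite''; the actual reason is that $(-)^{G}$ is exact and commutes with $\bfR\epsilon'_{\ast}$, so that $\bfR\epsilon'_{\ast}\QQ_{X_\bullet/G}=(\pi_{\ast}\QQ_{X})^{G}=\QQ_{X'}$. More substantially, you then invoke a \emph{filtered} descent isomorphism $\duBois_{X'}^{\bullet}\simeq\bfR\epsilon'_{\ast}\duBois_{X_\bullet/G}^{\bullet}$ for a proper hypercover whose terms $X_n/G$ are \emph{singular}; this is true, but it is already a theorem about Du Bois complexes (essentially h-descent), a strictly heavier input than the one triangle the paper uses. (Incidentally, $X_n/G\to X'$ is proper, not finite as you write.) Finally, your direct attack on the smooth case is left openly incomplete at the ramification locus; since both approaches bottom out in Du Bois's result for smooth $X$, it is cleaner simply to cite it there, as the paper does. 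The mixed-Hodge-module alternative you sketch at the end is correct and is indeed the shortest proof, at the price of importing Saito's formalism.
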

    We remark that both $\pi\lsta$ and $\Gamunder^{G}$ are exact functors. This morphism is the one in \cite{DuBois:complexe-de-deRham}*{(5.11.2)}, and we briefly explain its construction in \S\ref{section:DuBois}. 

    \begin{rema}
        Du Bois shows this statement when $X$ is smooth (see \cite{DuBois:complexe-de-deRham}*{Théorème 5.12}). In fact, the proof of Theorem \ref{theo:main} reduces to this situation.
    \end{rema}
    
    We collect some immediate corollaries.
    \begin{coro} \label{coro:trace}
        Let $f \colon Y \to X$ be a morphism between algebraic varieties over $\CC$. If (1) $f$ is a finite group quotient, or (2) $f$ is a finite surjective morphism and $X$ is normal, then there is a morphism $t \colon \bfR f\lsta \duBois_{Y}^{p} \to \duBois_{X}^{p}$ such that the composition
        $$ \duBois_{X}^{p} \to \bfR f\lsta \duBois_{Y}^{p} \xrightarrow{t} \duBois_{X}^{p} $$
        is an isomorphism.
    \end{coro}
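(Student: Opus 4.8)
The plan is to deduce everything from Theorem~\ref{theo:main}. First note that the assertion is immediate when $f$ is itself a finite group quotient: for any finite group quotient $q\colon V\to W$ by a group $G$, let $\nu_q\colon\duBois^p_W\to\bfR q\lsta\duBois^p_V$ be the natural morphism and $\rho_q\colon\bfR q\lsta\duBois^p_V\to\bfR\Gamunder^G\bfR q\lsta\duBois^p_V$ the canonical projection onto the invariant part; Theorem~\ref{theo:main} asserts that $\rho_q\circ\nu_q$ is an isomorphism, so
$$ t_q:=(\rho_q\circ\nu_q)\upmo\circ\rho_q\colon\ \bfR q\lsta\duBois^p_V\to\duBois^p_W $$
satisfies $t_q\circ\nu_q=\id$. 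Taking $(V,W,q)=(Y,X,f)$ settles the group quotient case, and this construction is reused below.

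For a finite surjective morphism $f\colon Y\to X$ of normal varieties, the plan is to reduce to the previous case by a Galois closure. Since a normal variety is a disjoint union of irreducible normal components and a finite surjection respects these decompositions, I may assume $X$ and $Y$ irreducible. Let $L/K(X)$ be the Galois closure of the (finite, separable) extension $K(Y)/K(X)$, set $G:=\operatorname{Gal}(L/K(X))$, $H:=\operatorname{Gal}(L/K(Y))$, and let $\widetilde Y$ be the normalization of $X$ in $L$. Then $\widetilde Y$ is a normal variety, $h\colon\widetilde Y\to X$ is a finite surjection realizing $X$ as $\widetilde Y/G$, and it factors as $\widetilde Y\xrightarrow{\,g\,}Y\xrightarrow{\,f\,}X$ where $g$ exhibits $Y$ as $\widetilde Y/H$ (here one uses that $Y$, being normal and finite over $X$, is the normalization of $X$ in $K(Y)=L^H$, which also describes $\widetilde Y/H$).

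Now I would set, with $\nu_f\colon\duBois^p_X\to\bfR f\lsta\duBois^p_Y$ and $\nu_g\colon\duBois^p_Y\to\bfR g\lsta\duBois^p_{\widetilde Y}$ the natural morphisms and $t_h$ as produced above for the group quotient $h$,
$$ t:=t_h\circ\bfR f\lsta(\nu_g)\colon\ \bfR f\lsta\duBois^p_Y\xrightarrow{\ \bfR f\lsta(\nu_g)\ }\bfR f\lsta\bfR g\lsta\duBois^p_{\widetilde Y}=\bfR h\lsta\duBois^p_{\widetilde Y}\xrightarrow{\ t_h\ }\duBois^p_X, $$
where the identification $\bfR f\lsta\bfR g\lsta=\bfR h\lsta$ uses that $f,g$ are affine. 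The composition with the natural morphism $\nu_f$ is then $t\circ\nu_f=t_h\circ\bfR f\lsta(\nu_g)\circ\nu_f=t_h\circ\nu_h=\id$, the middle equality being functoriality of the natural pullback morphisms under the factorization $h=f\circ g$, and the last being $t_h\circ\nu_h=\id$ from the first step.

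Since the statement is a corollary of Theorem~\ref{theo:main}, I do not expect a genuine obstacle. The one input beyond Theorem~\ref{theo:main} that has to be verified rather than asserted is the identity $\bfR f\lsta(\nu_g)\circ\nu_f=\nu_h$, i.e.\ the functoriality of $X\mapsto\duBois^p_X$ (indeed of the filtered object $\duBois^\bullet_X$) together with its natural morphisms under composition of morphisms; this is standard for the Du Bois complex but is the load-bearing point here. The other ingredients---existence and normality of $\widetilde Y$ with $\widetilde Y/G=X$ and $\widetilde Y/H=Y$, and the reduction to irreducible source and target---are routine.
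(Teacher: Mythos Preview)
Your proof is correct and follows essentially the same route as the paper: the group quotient case is read off directly from Theorem~\ref{theo:main}, and the general case is reduced to it by passing to a Galois closure $\widetilde Y\to Y\to X$ and factoring the natural map $\duBois_X^p\to\bfR h\lsta\duBois_{\widetilde Y}^p$ through $\bfR f\lsta\duBois_Y^p$. Your write-up is more explicit than the paper's (you spell out the construction of $t$, the reduction to irreducible components, and the functoriality $\bfR f\lsta(\nu_g)\circ\nu_f=\nu_h$), but the underlying argument is the same.
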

    
    \begin{coro} \label{coro:improve-under-finite}
        Let $f \colon Y \to X$ be a morphism between algebraic varieties over $\CC$. If (1) $f$ is a finite group quotient, or (2) $f$ is a finite surjective morphism and $X$ is normal, then
        \begin{enumerate}
            \item $\lcdef(X) \leq \lcdef(Y)$.
            \item If $Y$ has pre-$m$-Du Bois singularities, then so is $X$.
            \item If $Y$ has pre-$m$-rational singularities,  then so is $X$.
            \item Furthemore, if $Y$ and $X$ are proper, we have the inequality between the Hodge--Du Bois numbers $\underline{h}^{p,q}(Y) \geq \underline{h}^{p,q}(X)$.
        \end{enumerate}
        In particular, $\lcdef(Y) = 0$ implies that $\lcdef(X) = 0$.
    \end{coro}

    The Hodge--Du Bois number of a variety $X$ is defined as $\underline{h}^{p,q}(X) \coloneqq \dim \gr_{F}^{p} H^{p+q}(X)$, where $F$ is the Hodge filtration on the mixed Hodge structure on the singular cohomology of $X$.

    \begin{rema}
        We point out that in the case of a finite group quotient, we don't assume the normality of $X$.
    \end{rema}

    \begin{rema}
        We remark that (2) is already proved in \cite{SVV-DB}*{Proposition 4.2} and (3) is proved in \cite{DOR}*{Corollary 6.6} when $f$ is a finite group quotient. After finishing this article we learned that Kovács--Lank--Venkatesh obtained the same result as in (3) using the recent injectivity theorem of Kovács \cite{kovacs-lank-venkatesh}, and that Duc Vo also obtained similar results in his thesis \cite{Duc}. We hope that the existence of the trace morphism gives a clearer explanation for these statements.
    \end{rema}

    \section{Preliminaries}
    \subsection{The Du Bois complex}  \label{section:DuBois}
    We briefly explain the Du Bois complex $\duBois_{X}^{\bullet}$ of a variety, constructed in \cite{DuBois:complexe-de-deRham}. We refer to \cite{DuBois:complexe-de-deRham}*{\S1} for the details of the construction of the Du Bois complex, and \cite{DuBois:complexe-de-deRham}*{\S5} for the equivariant version. One can construct a simplicial resolution $\eps_{\bullet} \colon X_{\bullet} \to X$ from a cubical resolution of $X$ (for example, \cite{Guillen-Navarro-Gainza:Hyperresolutions-cubiques}). Then the Du Bois complex is defined as
    $$ (\duBois_{X}^{\bullet}, F^{\bullet}) := \bfR \eps_{\bullet \ast} (\Omega_{X_{\bullet}}^{\bullet}, F^{\bullet}),$$
    where the filtration on the right hand side is given by the `filtration bête'. This object lives in the 
     filtered derived category of differential complexes of order $\leq 1$, which we denote by $\cD_{\mathrm{diff}}(X)$. The graded pieces $\duBois_X^{p} := \gr_{F}^{p} \duBois_{X}^{\bullet}[p]$ live in the derived category of coherent sheaves on $X$. If one has a morphism $f \colon Y \to X$, then the functor $f\lsta$ defined on the level of complexes induces $\bfR f\lsta \colon \cD_{\mathrm{diff}}(Y) \to \cD_{\mathrm{diff}}(X)$. Taking the push-forward commutes with taking the graded pieces. i.e., for $K^{\bullet}$ in $\cD_{\mathrm{diff}}(Y)$, we have the canonical isomorphism
    $$ \bfR f\lsta \gr_{F}^{p} K^{\bullet} \simeq \gr_{F}^{p} \bfR f\lsta K^{\bullet},$$
    where the $\bfR  f\lsta$ on the left is the usual push-forward between the derived categories of $\cO$-modules.
    
    Let $G$ be a finite group. If a variety $Y$ has a $G$-action, one can define $\duBois_{Y}^{\bullet}$ in a $G$-equivariant manner using $G$-equivariant resolutions. We note that $\duBois_{Y}^{\bullet}$ lives in the \textit{derived category of equivariant filtered differential complexes}, $\cD_{\mathrm{diff}}(Y, G)$. We denote by $\cC_{\mathrm{diff}}(Y, G)$ the category of equivariant differential complexes. If $G$ acts trivially on $Y$, then one can take the invariant part $\Gamunder^{G} K^{\bullet}$. The functor $\Gamunder^{G} \colon \cC_{\mathrm{diff}}(Y, G) \to \cC_{\mathrm{diff}}(Y)$ derives itself to
    $$ \bfR \Gamunder^{G} \colon \cD_{\mathrm{diff}}(Y, G) \to \cD_{\mathrm{diff}}(Y).$$
    Note that since $G$ is finite, there is a canonical natural transformation from the forgetful functor to $\bfR \Gamunder^{G}$ since we are in characteristic zero.
    
    Therefore, if we consider a $G$-equivariant morphism $f \colon Y \to X$, where $G$ acts trivially on $X$, we get a natural morphism
    $$ \duBois_{X}^{\bullet} \to \bfR f\lsta \duBois_{Y}^{\bullet} \to \bfR \Gamunder^{G} \bfR f\lsta \duBois_{Y}^{\bullet}.$$
    We refer to \cite{DuBois:complexe-de-deRham}*{\S5} for details. This is the morphism that we study in Theorem \ref{theo:main}.

    \subsection{Local cohomological dimension} Consider a singular variety $X$ embedded in a smooth variety $Y$. Then, we can consider the local cohomology sheaves $\cH_{X}^{q} \cO_{Y}$. The local cohomology starts at the codimension, i.e.,
    \[ \codim_{Y} X = \min \{ q : \cH_{X}^{q} \cO_{Y} \neq 0\}.\]
    The other end of the cohomological amplitude is by definition the local cohomological dimension, i.e.,
   	\[ \lcd_{Y}X := \max \{ q : \cH_{X}^{q} \cO_{Y} \neq 0\}.\]
   The difference between these two objects $\lcd_{Y}X - \codim_{Y}X$ is called the local cohomological defect, which we denote by $\lcdef(X)$, following the notation of \cite{Popa-Shen:DuBoisLCDEF}. This quantity does not depend on the choice of the embedding $X \hookrightarrow Y$. By studying the Hodge module structure of local cohomology, the local cohomological defect can be completely characterized by the vanishing and non-vanishing behavior of the Grothendieck dual of the Du Bois complex.
   \begin{prop}[\cite{Mustata-Popa22:Hodge-filtration-local-cohomology}*{Corollary 5.3}] \label{theo:MP-lcd-intermsof-DuBois}
   	The local cohomological defect $\lcdef(X)$ is the largest integer $c$ satisfying the following:
   	\begin{enumerate}
   		\item $\SheafExt_{\cO_{X}}^{j+c}(\duBois_{X}^{j}, \omega_{X}^{\bullet}[-\dim X]) \neq 0$ for some $j \geq 0$.
   		\item $\SheafExt_{\cO_{X}}^{j+c+1} (\duBois_{X}^{j}, \omega_{X}^{\bullet}[-\dim X]) = 0$ for all $j \geq 0$.
   	\end{enumerate}
   \end{prop}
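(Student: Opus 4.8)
The plan is to reduce the statement to Mustață--Popa's comparison between local cohomology and the Du Bois complex, and then to carry out the degree bookkeeping that converts the vanishing/nonvanishing pattern of $\cH^{q}_{X}\cO_{Y}$ into conditions (1) and (2). First I would fix a closed embedding $i\colon X\hookrightarrow Y$ into a smooth variety $Y$ of dimension $n$ and set $r=\codim_{Y}X=n-\dim X$. Since $\cH^{q}_{X}\cO_{Y}$ vanishes for $q<r$ and $\lcd_{Y}X=\max\{q:\cH^{q}_{X}\cO_{Y}\neq 0\}$, the defect is $\lcdef(X)=\lcd_{Y}X-r$, independently of the embedding. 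I would then record the following purely formal reduction: writing $e_{j}=\max\{k:\SheafExt^{k}_{\cO_{X}}(\duBois_{X}^{j},\omega_{X}^{\bullet}[-\dim X])\neq 0\}$, condition (1) at offset $c$ holds precisely when $e_{j}\geq j+c$ for some $j$, so the largest $c$ for which it can hold is $c_{0}:=\max_{j\geq 0}(e_{j}-j)$; and at $c=c_{0}$ condition (2) is automatic, since $e_{j}\leq j+c_{0}$ for all $j$ gives $\SheafExt^{j+c_{0}+1}_{\cO_{X}}(\duBois_{X}^{j},\omega_{X}^{\bullet}[-\dim X])=0$. Thus the proposition is equivalent to the identity $\lcd_{Y}X-r=\max_{j\geq 0}(e_{j}-j)$.

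The main input is the comparison theorem: the object $\bfR\Gamunder_{X}\cO_{Y}$ underlies a complex of mixed Hodge modules on $Y$, and Grothendieck--Serre duality on $Y$ together with the identification of the graded pieces of its Hodge filtration expresses it, up to the twist by $\omega_{Y}$ and the shift by $n$, through the Grothendieck duals $\DD_{X}\duBois_{X}^{j}=\bfR\SheafHom_{\cO_{X}}(\duBois_{X}^{j},\omega_{X}^{\bullet})$ of the Du Bois complexes. Unwinding degrees, the cohomology sheaf $\cH^{k}(\DD_{X}\duBois_{X}^{j})=\SheafExt^{k}_{\cO_{X}}(\duBois_{X}^{j},\omega_{X}^{\bullet})$, equivalently $\SheafExt^{j+c}_{\cO_{X}}(\duBois_{X}^{j},\omega_{X}^{\bullet}[-\dim X])$ after the shift, feeds into the local cohomology sheaf in degree $q=r+c$. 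I would make this matching precise, so that nonvanishing of $\SheafExt^{j+c}(\duBois_{X}^{j},\omega_{X}^{\bullet}[-\dim X])$ contributes to $\cH^{r+c}_{X}\cO_{Y}$. The upper bound direction then follows at once: vanishing of all these Ext sheaves at offset $c+1$ forces $\cH^{q}_{X}\cO_{Y}=0$ for $q>r+c$, whence $\lcd_{Y}X\leq r+c$, giving $\lcd_{Y}X-r\leq\max_{j\geq 0}(e_{j}-j)$.

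The delicate direction, and what I expect to be the main obstacle, is realizing the supremum: a priori the comparison presents $\cH^{\bullet}_{X}\cO_{Y}$ as the abutment of a spectral sequence assembled from the pieces $\DD_{X}\duBois_{X}^{j}$, so cancellation could in principle kill a nonzero extremal Ext sheaf before it reaches the top local cohomology degree. The key point is that at the extreme offset $c_{0}=\max_{j}(e_{j}-j)$ no such cancellation occurs, and this is exactly where the Hodge-theoretic structure is essential: the strictness of the Hodge filtration on the mixed Hodge module $\bfR\Gamunder_{X}\cO_{Y}$ forces the relevant spectral sequence to behave well in the extremal degree, so the top nonvanishing graded piece survives and produces a nonzero $\cH^{\lcd_{Y}X}_{X}\cO_{Y}$, yielding $\lcd_{Y}X-r\geq\max_{j\geq 0}(e_{j}-j)$. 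Combining the two directions gives $\lcd_{Y}X=r+\max_{j\geq 0}(e_{j}-j)$, i.e.\ $\lcdef(X)=\max_{j\geq 0}(e_{j}-j)=c_{0}$, which is precisely the asserted characterization.
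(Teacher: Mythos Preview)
The paper does not actually prove this proposition: it is quoted from Musta\c{t}\u{a}--Popa, and the only additional content is the remark that one applies Grothendieck duality along the closed immersion $\iota\colon X\hookrightarrow Y$ to pass from $\SheafExt$ sheaves on $Y$ (as in the original statement of \cite{Mustata-Popa22:Hodge-filtration-local-cohomology}*{Corollary 5.3}) to $\SheafExt$ sheaves on $X$ against $\omega_{X}^{\bullet}[-\dim X]$. So your proposal is not a reproduction of the paper's argument; you are sketching a proof of the cited result itself.

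Your overall plan is sound---the reformulation as $\lcdef(X)=\max_{j\geq 0}(e_{j}-j)$ is correct, and the ingredients (Hodge module structure on local cohomology, Grothendieck duality, identification of graded pieces with duals of Du Bois complexes) are the right ones. The one place where your framing is off is the ``delicate direction''. You describe the comparison as presenting $\cH^{\bullet}_{X}\cO_{Y}$ as the abutment of a spectral sequence built from the $\DD_{X}\duBois_{X}^{j}$, with a possible cancellation problem at the extremal degree that strictness must then rule out. That is not how the identification works. The actual statement is that for each fixed $q$, the sheaf $\cH^{q}_{X}\cO_{Y}$ carries a Hodge filtration (coming from its mixed Hodge module structure), and the graded pieces of \emph{that} filtration are, up to a twist by $\omega_{Y}$ and a shift, precisely the sheaves $\SheafExt^{q}_{\cO_{Y}}(\iota_{\ast}\duBois_{X}^{j},\omega_{Y})$ as $j$ varies. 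Since the filtration is finite and exhaustive, $\cH^{q}_{X}\cO_{Y}=0$ if and only if all of its graded pieces vanish. There is no spectral sequence to degenerate and no cancellation to worry about: both inequalities $\lcd_{Y}X-r\leq \max_{j}(e_{j}-j)$ and $\lcd_{Y}X-r\geq \max_{j}(e_{j}-j)$ follow immediately from this equivalence once the index bookkeeping (which you left as ``I would make this matching precise'') is carried out. Your appeal to strictness is morally in the right place---it is what guarantees that the Hodge filtration on the individual cohomology sheaves is the one induced from the complex---but the argument is more direct than your sketch suggests.
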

	We point out that we are restating \cite{Mustata-Popa22:Hodge-filtration-local-cohomology}*{Corollary 5.3} by applying Grothendieck duality to the inclusion $\iota \colon X \hookrightarrow Y$.

    We also note that the local cohomological defect is topological in nature. Using the Riemann--Hilbert correspondence and (Verdier) duality, we have
    \begin{theo}
        $ \lcdef(X) = \max \{ j : \prescript{p}{}{\cH}^{-j} (\QQ_{X}[\dim X]) \neq 0\}.$
    \end{theo}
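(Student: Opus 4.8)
The plan is to fix a closed embedding $\iota \colon X \hookrightarrow Y$ with $Y$ smooth of dimension $N$, so that $\lcdef(X) = \lcd_{Y}X - \codim_{Y}X = \lcd_{Y}X - (N - \dim X)$, and to extract $\lcd_{Y}X$ from the perverse cohomology of a constructible complex via the Riemann--Hilbert correspondence. The first point I would use is that the local cohomology sheaves $\cH_{X}^{q}\cO_{Y}$ are regular holonomic $\cD_{Y}$-modules, so that $R\Gamma_{[X]}\cO_{Y}$ lies in $D^{b}_{rh}(\cD_{Y})$ with $\cH^{q}(R\Gamma_{[X]}\cO_{Y}) = \cH_{X}^{q}\cO_{Y}$; in particular $\lcd_{Y}X = \max\{q : \cH^{q}(R\Gamma_{[X]}\cO_{Y}) \neq 0\}$. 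Since $\DR_{Y}$ is an equivalence $D^{b}_{rh}(\cD_{Y}) \xrightarrow{\sim} D^{b}_{c}(Y)$ which is $t$-exact for the standard and perverse $t$-structures and intertwines the algebraic local cohomology functor $R\Gamma_{[X]}$ with the topological one $\iota_{*}\iota^{!}$, one gets
$$ \lcd_{Y}X = \max\{q : \prescript{p}{}{\cH}^{q}(\DR_{Y}R\Gamma_{[X]}\cO_{Y}) \neq 0\}, \qquad \DR_{Y}R\Gamma_{[X]}\cO_{Y} \cong \iota_{*}\iota^{!}(\CC_{Y}[N]). $$

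Next I would simplify the right-hand complex by Verdier duality. Since $Y$ is smooth, $\CC_{Y} \cong \omega_{Y}^{\mathrm{top}}[-2N]$ with $\omega_{Y}^{\mathrm{top}} = a_{Y}^{!}\CC$, and $\iota^{!}\omega_{Y}^{\mathrm{top}} \cong \omega_{X}^{\mathrm{top}} = \mathbb{D}_{X}\CC_{X}$, hence $\DR_{Y}R\Gamma_{[X]}\cO_{Y} \cong \iota_{*}(\mathbb{D}_{X}\CC_{X})[-N]$. Taking perverse cohomology and using that $\iota_{*}$ is perverse $t$-exact and that $\mathbb{D}_{X}$ interchanges $\prescript{p}{}{\cH}^{-k}$ with $\prescript{p}{}{\cH}^{k}$, one obtains
$$ \prescript{p}{}{\cH}^{q}(\DR_{Y}R\Gamma_{[X]}\cO_{Y}) \cong \iota_{*}\,\mathbb{D}_{X}\big(\prescript{p}{}{\cH}^{\,N - q - \dim X}(\CC_{X}[\dim X])\big), $$
which is nonzero exactly when $\prescript{p}{}{\cH}^{\,N - q - \dim X}(\QQ_{X}[\dim X]) \neq 0$. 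Therefore $\lcd_{Y}X = \max\{q : \prescript{p}{}{\cH}^{\,N - \dim X - q}(\QQ_{X}[\dim X]) \neq 0\}$; substituting $j = q - (N - \dim X)$ and subtracting $\codim_{Y}X = N - \dim X$ yields exactly $\lcdef(X) = \max\{j : \prescript{p}{}{\cH}^{-j}(\QQ_{X}[\dim X]) \neq 0\}$.

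I do not expect a deep obstacle here: the argument is an assembly of standard facts together with shift bookkeeping. The two places that need care are (i) invoking the holonomicity and regularity of local cohomology together with the $t$-exactness of $\DR_{Y}$, so that $\lcd_{Y}X$ --- which is \emph{a priori} defined through the non-coherent sheaves $\cH_{X}^{q}\cO_{Y}$ --- is legitimately read off from the perverse cohomology of a constructible complex; and (ii) keeping the Verdier-duality and de Rham shifts consistent (the normalization $\DR_{Y}(\cO_{Y}) = \CC_{Y}[N]$ and the identity $\iota^{!}\CC_{Y} \cong \omega_{X}^{\mathrm{top}}[-2N]$ are where a sign or shift error would most easily slip in). An alternative route would be to use Proposition \ref{theo:MP-lcd-intermsof-DuBois}, rewriting $\DB_{X}^{j}$ as the graded pieces of the filtered de Rham complex of $\QQ_{X}^{H}[\dim X]$ and transferring the cohomological amplitude across the Hodge filtration spectral sequence via its strictness; that works too but is longer, and passing directly through $R\Gamma_{[X]}\cO_{Y}$ sidesteps the strictness argument.
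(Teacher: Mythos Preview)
Your argument is correct and follows precisely the route the paper indicates: the paper does not give its own proof but simply states that the result follows ``using the Riemann--Hilbert correspondence and (Verdier) duality'' and cites \cite{RSW-lcdef-intermsofTopology} for details. You have filled in exactly those details --- passing from $\lcd_{Y}X$ to perverse amplitude via the $t$-exactness of $\DR_{Y}$, then identifying $\iota_{*}\iota^{!}\CC_{Y}[N]$ with $\iota_{*}\mathbb{D}_{X}\CC_{X}[-N]$ and tracking the shift --- and your bookkeeping is right (including the tacit passage from $\CC_{X}$ to $\QQ_{X}$, which is harmless since perverse cohomology commutes with the field extension).
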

    Here, $\prescript{p}{}{\cH}$ is the perverse cohomology. See for example \cite{RSW-lcdef-intermsofTopology} for details.
	
	\subsection{Higher Du Bois and Higher rationality} In \cite{SVV-DB}, the authors generalize the notion of higher Du Bois and rational singularities only involving the behavior of the Du Bois complex, in an attempt to find the natural definition outside of the local complete intersection setting. In that vein, they define
	\begin{defi}
		Let $X$ be a variety of dimension $n$. Then $X$ has pre-$m$-Du Bois singularities if
		\[ \cH^{i} (\duBois_{X}^{p}) = 0 \qquad \text{for all } 0\leq p \leq m,\text{ and } i > 0.\]
		$X$ has pre-$m$-rational singularities if
		\[ \SheafExt_{\cO_{X}}^{i}(\duBois_{X}^{n-p}, \omega_{X}^{\bullet}[-\dim X]) = 0 \qquad \text{ for all } 0 \leq p \leq m \text{ and } i > 0.\]
	\end{defi}
    Note that with some additional conditions (bounds on the codimension of the singular locus and the reflexivity of $\cH^{0}(\duBois_{X}^{p})$), these conditions are equivalent to the original definition in the local complete intersection case (see \cite{Jung-Kim-Saito-Yoon} or  \cite{FL-HDuBois}).

    \section{Proof of the theorem}
    Here, we give the proof of Theorem \ref{theo:main}. We prove it by induction on the dimension of $X$. If $\dim X = 0$, there is nothing to show. Since $G$ acts on $X$, the singular locus $Z = X_{\sing}$ immediately carries a $G$-action. Consider a $G$-equivariant projective resolution of singularities $\mu \colon \widetilde{X} \to X$, which is an isomorphism over the smooth locus of $X$. Let $E = \mu^{-1}(Z)_{\mathrm{red}}$. Note that $\widetilde{X}, X, E,$ and $Z$ have a $G$-action. By assumption, $X' = X/G$ and we denote by $\widetilde{X}', E',$ and $Z'$ analogously for the quotient spaces. Note that these quotients exist as varieties since $\mu$ is a projective morphism. Similar to the quotient morphism $\pi_{X} \colon X \to X'$, we denote the quotient morphisms by $\pi_{\widetilde{X}}, \pi_{E}, \pi_{Z}$ analogously. Then we have two commutative diagrams
    $$ \begin{tikzcd}
        E \ar[r] \ar[d, "\mu_{E}"] & \widetilde{X} \ar[d, "\mu"] \\ Z \ar[r] & X
    \end{tikzcd} \qquad \text{and} \qquad  \begin{tikzcd}
        E' \ar[r] \ar[d, "\mu_{E}'"] & \widetilde{X}' \ar[d, "\mu'"] \\ Z' \ar[r] & X'.
    \end{tikzcd} $$
    Note that $\mu$ and $\mu'$ are isomorphisms over $X \setminus Z$ and $X' \setminus Z'$ respectively. Then we have the following commutative diagram:
    $$
    \begin{tikzcd}
        \duBois_{X'}^{\bullet} \ar[r] \ar[d] & \bfR \mu\lsta'\duBois_{\widetilde{X}}^{\bullet} \oplus \duBois_{Z'}^{\bullet} \ar[r] \ar[d] & \bfR \mu_{E,\ast}' \duBois_{E'}^{\bullet} \xrightarrow{+1}\ar[d] \\
        \bfR \pi_{X,\ast} \duBois_{X}^{\bullet} \ar[r] \ar[d] & \bfR \mu\lsta' \bfR \pi_{\widetilde{X}, \ast} \duBois_{\widetilde{X}}^{\bullet} \oplus \bfR \pi_{Z,\ast} \duBois_{Z}^{\bullet} \ar[r] \ar[d]& \bfR \mu_{E,\ast}' \bfR \pi_{E,\ast} \duBois_{E}^{\bullet} \xrightarrow{+1} \ar[d]\\
        \bfR \Gamunder^{G}\bfR \pi_{X,\ast} \duBois_{X}^{\bullet} \ar[r] & \bfR \Gamunder^{G}\bfR \mu\lsta' \bfR \pi_{\widetilde{X}, \ast} \duBois_{\widetilde{X}}^{\bullet} \oplus \bfR \Gamunder^{G}\bfR \pi_{Z,\ast} \duBois_{Z}^{\bullet} \ar[r]& \bfR \Gamunder^{G}\bfR \mu_{E,\ast}' \bfR \pi_{E,\ast} \duBois_{E}^{\bullet} \xrightarrow{+1}. 
    \end{tikzcd}
    $$
    The horizontal rows are exact triangles. Note that $\mu'$ and $\mu_{E}'$ are morphisms between varieties with trivial $G$-action. Therefore, $\bfR \Gamunder^{G}$ commutes with $\bfR \mu\lsta'$ and $\bfR \mu_{E,\ast}'$. We assert that the composition of the vertical arrows give isomorphisms. Since $\dim Z , \dim E < \dim X$ we have by induction that
    $$ \duBois_{Z'}^{\bullet} \xrightarrow{\simeq} \bfR \Gamunder^{G} \bfR \pi_{Z, \ast} \duBois_{Z}^{\bullet}$$
    and similar for $E$. Also, since $\widetilde{X}$ is smooth, we have the analogous isomorphism for $\widetilde{X}$ by \cite{DuBois:complexe-de-deRham}*{Théorème 5.12}. Since the compositions of the second and the third vertical arrows are isomorphisms, the composition of the first vertical arrows
    $$ \duBois_{X'}^{\bullet} \to \bfR \Gamunder^{G} \bfR \pi_{X, \ast} \duBois_{X}^{\bullet}$$
    is an isomorphism in the filtered derived category. The second assertion follows from the first one by taking graded pieces. \hfill{$\square$}

    \section{Proof of the corollaries}
    \begin{proof}[Proof of Corollary \ref{coro:trace}]
        If $X = Y/G$ for $G$ a finite group acting on $X$, this is exactly the second statement of Theorem \ref{theo:main}. For the second case, consider $\gamma \colon Z \to Y^{n} \xrightarrow{\nu} Y \to X$ such that $Z$ is Galois over $X$ and $Y^{n} \to Y$ is the normalization. Then, the composition of the morphisms
        $ \duBois_{X}^{p} \to \bfR f\lsta \duBois_{Y}^{p}  \to \bfR (f\circ \nu)\lsta \duBois_{Y^{n}}^{p} \to \bfR \gamma\lsta \duBois_{Z}^{p} \to \duBois_{X}^{p}$
        is an isomorphism.
    \end{proof}

    \begin{proof}[Proof of Corollary \ref{coro:improve-under-finite}]
        By applying Grothendieck duality for the map $f$ to Corollary \ref{coro:trace}, we see that the composition
        $$ \bfR\SheafHom_{\cO_{X}}(\duBois_{X}^{n-p},\omega_{X}^{\bullet}) \to \bfR f\lsta \bfR \SheafHom_{\cO_{Y}}(\duBois_{Y}^{n-p}, \omega_{Y}^{\bullet}) \to \bfR\SheafHom_{\cO_{X}}(\duBois_{X}^{n-p},\omega_{X}^{\bullet})$$
        is an isomorphism. Since $f$ is a finite morphism, $f\lsta$ is exact. Therefore,
        $$ \SheafExt_{\cO_{Y}}^{q} (\duBois_{Y}^{n-p}, \omega_{Y}^{\bullet}) = 0 \implies \SheafExt_{\cO_{X}}^{q} (\duBois_{X}^{n-p}, \omega_{X}^{\bullet}) = 0.$$
        This immediately shows the third statement. The first assertion also follows from the characterization of the local cohomological defect in terms of the depth of the Du Bois complex (see Proposition \ref{theo:MP-lcd-intermsof-DuBois}). The second assertion is also immediate from Theorem \ref{theo:main}. The last assertion follows from the fact that the Hodge--Du Bois number of a proper variety $X$ can be computed by the cohomology of the Du Bois complex, i.e., $\underline{h}^{p,q}(X) = \dim \HH^{q}(X, \duBois_{X}^{p})$.
    \end{proof}

    \begin{rema}
        We also immediately recover the other statement of \cite{DOR}*{Corollary 6.6} as well. Let $\pi \colon Y \to X$ be a finite group quotient and let $\dim X = \dim Y = n$. Then we have a commutative diagram
        $$ \begin{tikzcd}
            \duBois_{X}^{p} \ar[r] \ar[d] & \bfR \SheafHom_{\cO_{X}} (\duBois_{X}^{n-p}, \omega_{X}^{\bullet}[-n])  \\
            \pi\lsta\duBois_{Y}^{p} \ar[r] & \pi\lsta \bfR \SheafHom_{\cO_{Y}} (\duBois_{Y}^{n-p}, \omega_{Y}^{\bullet}[-n]) \ar[u].
        \end{tikzcd}$$
        The statement of \cite{DOR}*{Corollary 6.6} says that if the bottom row is an isomorphism for $p \leq m$, then the top row is as well for $p \leq m$. Note that after taking cohomology, the first vertical map is an inclusion onto the $G$-invariant part and the second map sends the $G$-invariant part isomorphically to the target. Therefore, if the bottom row is an isomorphism, then the top row is an isomorphism in cohomology (hence an isomorphism in the derived category). Note that $\pi\lsta$ is exact.
    \end{rema}
    
    \begin{rema}
    	We note that the local cohomological defect can be strictly smaller after taking a finite quotient. For example, consider an abelian surface $A$ with a $G = \ZZ/2\ZZ$-action given by the negation. If we consider a $G$-equivariant very ample line bundle and consider the cone $X$ over $A$ associated to that line bundle, then $\lcdef(X) = 1$, since $\prescript{p}{}{\cH}^{-1}(\QQ_{X}[3]) \simeq i_{x\ast} H^{1}(A, \QQ)$, where $i_{x} : \{x \} \to X$ is the closed immersion from the cone point. However, $X' = X/G$ is a cone over $A/G$ and $H^{1}(A/G, \QQ) = 0$ since $A/ G$ is simply connected (it is klt and its minimal resolution is a $K3$ surface). This implies $\lcdef(X') = 0$ by the same reason since $A/G$ has quotient singularities.
    \end{rema}

    \textbf{Acknowledgements.} The author would like to thank Mircea Musta\c{t}\u{a} and Sridhar Venkatesh for many discussions. The author also thanks Sándor Kovács for pointing out that stronger versions of the corollaries can be formulated.

    \bibliographystyle{alpha}
    \bibliography{Reference}

\end{document}